\definecolor{Red}{rgb}{0.7,0,0.1}
\definecolor{Green}{rgb}{0,0.7,0}
\def\url@leostyle{%
 \@ifundefined{selectfont}{\def\UrlFont{\sf}}{\def\UrlFont{\scriptsize\ttfamily}}} \makeatother\urlstyle{leo}
\newtheorem{theorem}{Theorem}
\newtheorem{proposition}[theorem]{Proposition}
\theoremstyle{definition}
\newtheorem{definition}[theorem]{Definition}
\theoremstyle{remark}
\newtheorem{remark}[theorem]{Remark}
\numberwithin{equation}{section}
\numberwithin{theorem}{section}
\title{On taming Moffatt-Kimura vortices of doom in the viscous case\footnote{To appear in a special issue of PAFA dedicated to Professor Peter Constantin 
on the occasion of his 75th birthday}}
\author{
Zoran Gruji\'c\\
\vspace{0.2in}
\small UAB  \\[-0.6ex]
}
\begin{document}
\maketitle

\begin{abstract}
In this note we propose a two-layer viscous mechanism for preventing finite time singularity formation in the Moffatt-Kimura scenario of two
counter-rotating vortex rings colliding at a nontrivial angle. In the first layer the scenario is recast within the framework of the study of
turbulent dissipation based on a suitably defined `scale of sparseness' of the regions of intense fluid activity. Here it is found that the problem
is (at worst) critical, i.e., the upper bound on the scale of sparseness of the vorticity super-level sets is comparable to the lower bound on the radius of spatial
analyticity. In the second layer, an additional more subtle mechanism is identified, potentially capable of driving the scale of sparseness into 
the dissipation range and preventing the formation 
of a singularity. The mechanism originates in certain analytic cancellation properties of the vortex-stretching term in the sense of compensated compactness
in Hardy spaces which then convert information on local mean oscillations of the vorticity direction (boundedness in certain log-composite weighted
local \emph{bmo} spaces) into log-composite faster decay of the vorticity super-level sets.

\end{abstract}

\section{Introduction}

In the study of possible singularity formation in the 3D Euler and Navier-Stokes equations the scenarios based on setting up a configuration 
of vortex structures (e.g., vortex tubes or vortex rings) colliding at a nontrivial angle and engineered with an eye on maximizing the vorticity amplification primarily via
the mechanism of vortex stretching have been of a particular interest as plausible avenues to arrive at a singularity.

\medskip

Moffatt and Kimura \cite{Moffatt2019, Moffatt2019R} proposed a scenario of two counter-rotating (opposing circulation) vortex rings colliding at a nontrivial angle for which
they derived a reduced model -- a system of ODEs relating the key physical quantities of interest: the separation distance and the curvature and the radii of the vortex cores
at `tipping points'.
The mechanism is essentially Eulerian and the effects of the viscosity are 
discussed \emph{a posteriori}. Dynamics is divided in two phases, Phase I takes place before the reconnection and Phase II during reconnection.

\medskip

\begin{figure}[h!]
  \centering
     \includegraphics[scale=0.7]{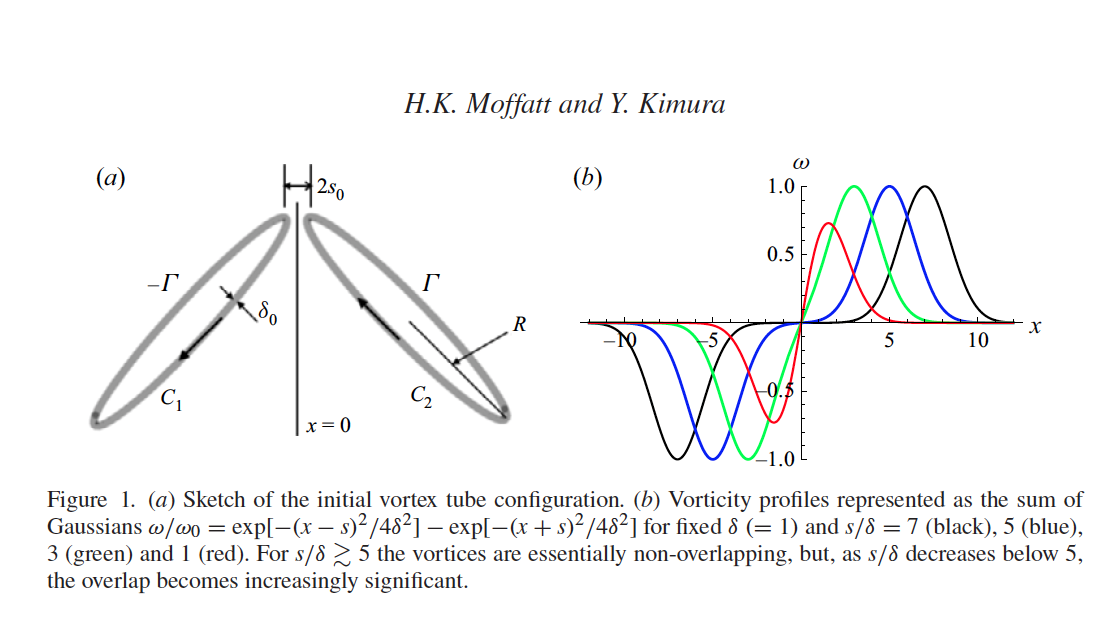}
  \caption{initial configuration \cite{Moffatt2023}}
\end{figure}

\medskip

While the scenario remains a plausible finite time blow-up scenario in the inviscid case, the situation in the viscous case, due to the process of viscous
reconnection, is less transparent. Yao and Hussain \cite{Yao2020} performed a DNS study initialized at the Moffatt-Kimura initial configuration 
and observed that -- in Phase II -- there seems to be significant flattening and stripping of the vortex cores which would be inconsistent with the model. Moreover
they argued that the formation of bridges during the viscous reconnection (Figure 2) is capable of arresting the growth of the vorticity, a mechanism that was not accounted
for in the model. Moffatt and Kimura addressed some of these issues in \cite{Moffatt2023} and -- in particular -- pointed out that according to their updated calculations
the conditions needed to attain a significant amplification of the vorticity magnitude are far beyond what can be realized either in experiments or
numerical simulations. 

\medskip

\begin{figure}[h!]
  \centering
     \includegraphics[scale=0.4]{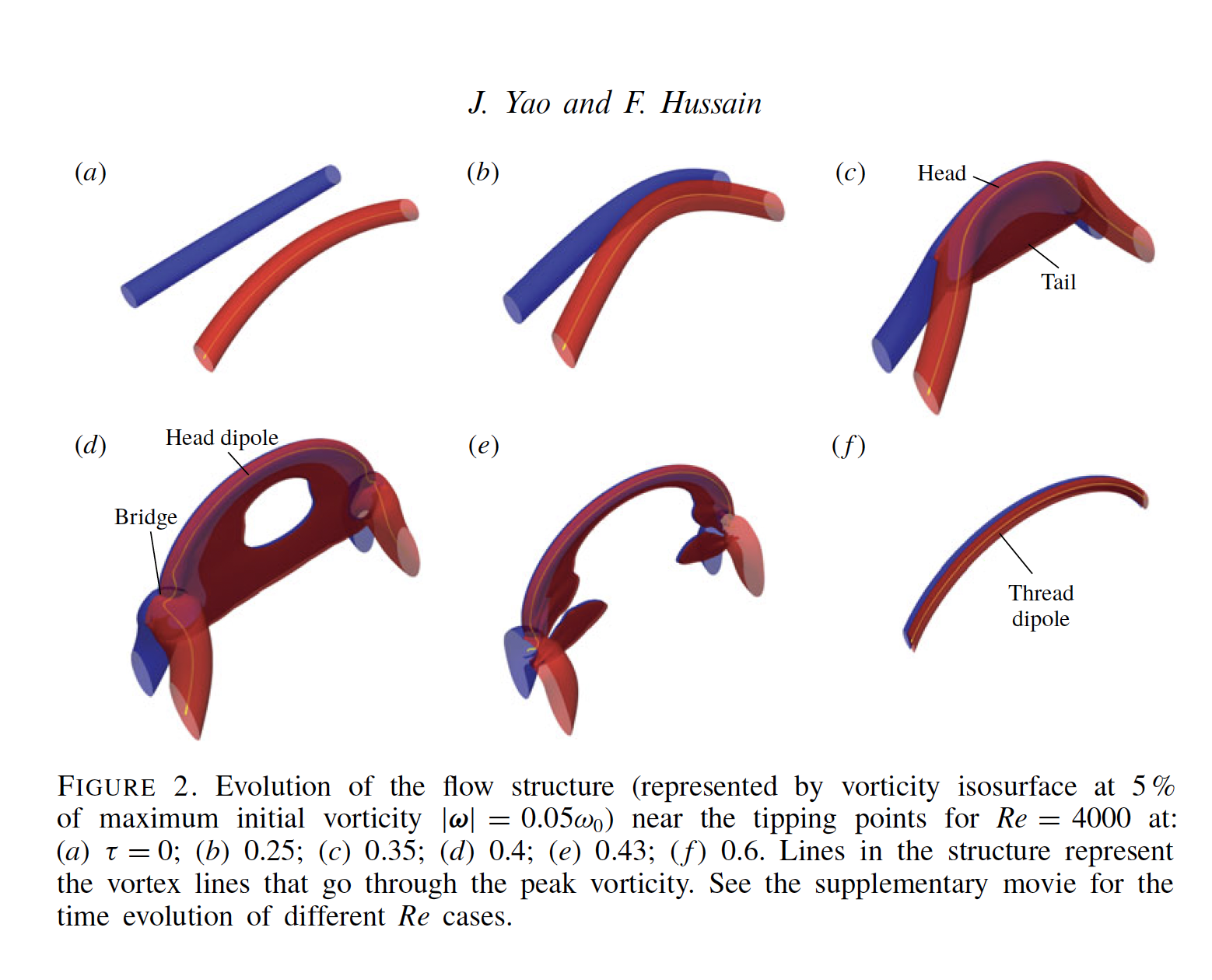}
  \caption{viscous reconnection, bridges and threads at $\emph{Re} = 4 000$ \cite{Yao2020}}
\end{figure}

\medskip

The goal of this note is to consider the problem from a slightly different angle and propose a two-layer mechanism for preventing a finite time blow-up 
in the viscous case.

\medskip

The underlying mechanism is the one of turbulent dissipation considered in the mathematical framework based on the suitably defined `scale of sparseness' of the 
vorticity super-level sets quantifying spatial intermittency. Shortly, if the scale of sparseness near a possible singular time falls below the scale of 
the radius of spatial analyticity measured in $L^\infty$ (a natural 
dissipation scale quantifying viscous smoothing), the harmonic measure maximum principle will prevent a further growth
of the vorticity magnitude contradicting a finite time
blow-up \cite{Grujic2013, Bradshaw2019, Grujic2019, Grujic2020}.

\medskip

In the first layer we observe that Moffatt-Kimura scenario is at worst critical within the aforementioned framework, i.e., the scale of sparseness encoded in
the model (diameter of the vortex cores as they rush toward a potential singularity) and the scale of
the radius of spatial analyticity (a rigorous lower bound derived from the Navier-Stokes equations) coincide. This is based on the assumption made 
in the original model where the vortex cores remain mostly compact
and nearly circular even when transitioning to Phase II. Note that within the framework, this is the geometrically worst case scenario since flattening
and stripping of the cores -- as observed in the DNS performed by Yao and Hussain -- could possibly drive the scale of sparseness into the dissipation
range. 

\medskip

Since any quantification of the possible gain of sparseness due to the formation of bridges and threads in the process of viscous reconnection is out of reach, 
we propose to explore a different route to (possibly)  braking the criticality -- the one based on the study of \emph{local mean oscillations of the vorticity direction.}

\medskip

The significance of the vorticity direction in the study of the problem of global regularity for the 3D Navier-Stokes system was first recognized
by Constantin in \cite{Constantin1994} where a singular integral representation of the stretching factor in the evolution of the vorticity magnitude was derived. 
The representation features a geometric kernel the strength of which is depleted precisely by local coherence of the vorticity direction. In a follow-up
work \cite{Constantin1993} Constantin and Fefferman showed that as long as the vorticity direction is Lipschitz in the regions of intense vorticity no finite time blow-up
can occur. Subsequently Beirao Da Veiga and Berselli reduced the Lipschitz condition to $\frac{1}{2}$-H\"older \cite{BdVBe02} which was  -- in turn -- followed up
by several works including a spatiotemporal localization of the $\frac{1}{2}$-H\"older regularity criterion \cite{Grujic2009} and a work on the role of local coherence
of the vorticity direction in the study of 3D enstrophy cascade in the NS flows \cite{Dascaliuc2013}.

\medskip

In a particular case of the critical blow-up, a significant reduction of regularity of the vorticity direction needed to prevent a finite time blow-up
is possible. More precisely Giga and Miura in \cite{Giga2011} demonstrated that in the case of Type I blow up (a blow-up rate of the $L^\infty$-norm
is bounded by the self-similar rate) it suffices that the vorticity direction is uniformly continuous in the spatial variable.

\medskip

Since the spatiotemporal scaling of Moffatt-Kimura scenario is consistent with a Type I blow-up, a potential singularity formation would then
force the regularity of the vorticity direction below that of uniform continuity. In search of a suitable functional class it is helpful to notice that
dynamics of the viscous reconnection at 
high Reynolds numbers seems to feature a creation of secondary vortices (an `avalanche') which then tend to tangle up in intricate ways
(Figure 3). Consequently -- as the flow approaches a possible singularity -- the vorticity direction is expected to exhibit 
a nontrivial oscillatory behavior and the class of local weighted spaces of functions of bounded mean oscillations seems 
a reasonable choice to encode this.

\medskip

\begin{figure}[h!]
  \centering
     \includegraphics[scale=0.4]{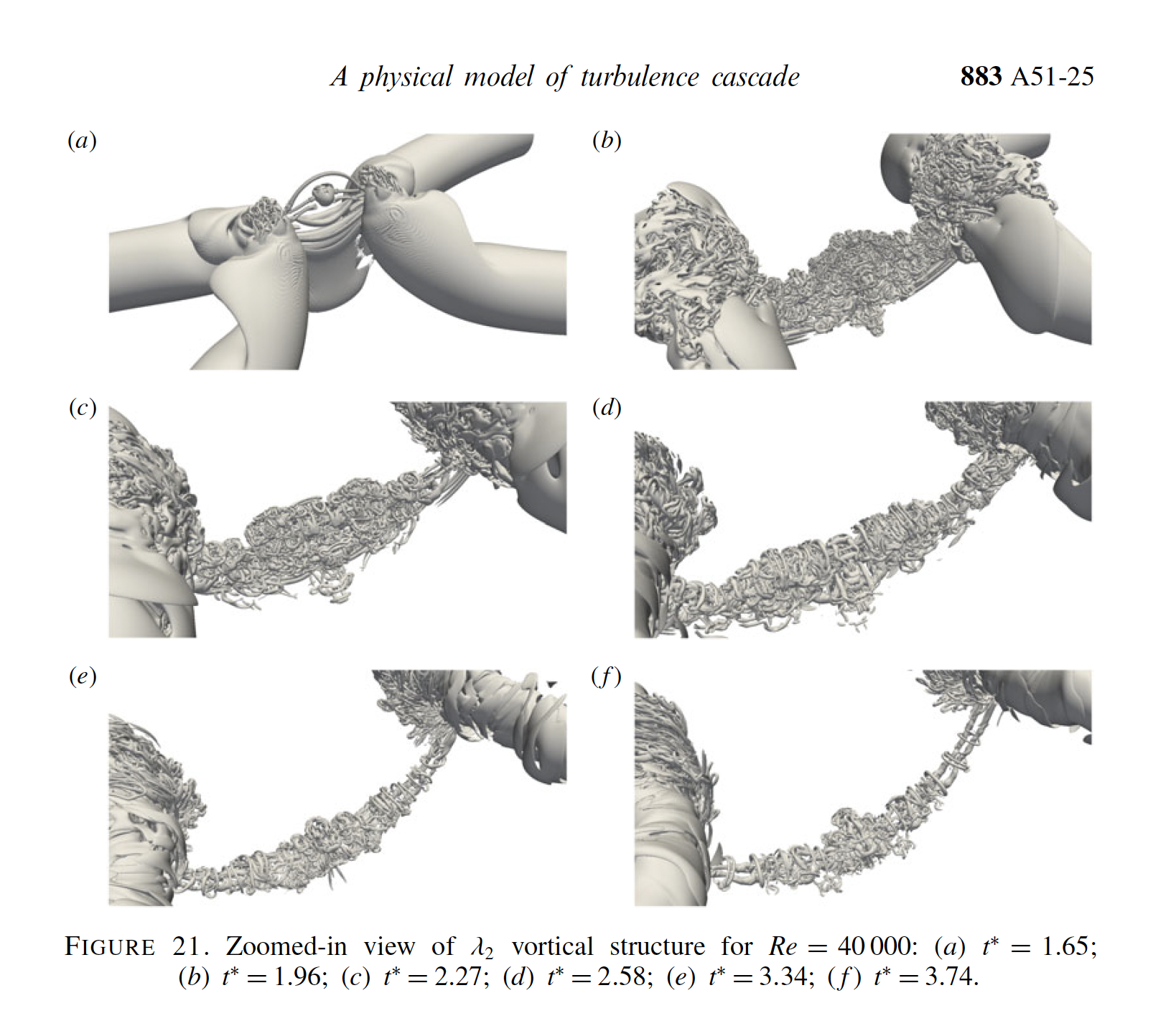}
  \caption{viscous reconnection, avalanche at  $\emph{Re} = 40 000$ \cite{Yao2020A}}
\end{figure}

\medskip

Let us denote the mean oscillation of a function $f$ over the cube $I=I(x, r)$ centered at $x$ with the side length $2r$ by
\[
 \Omega(f, I(x,r)) = \frac{1}{|I(x,r)|} \int_{I(x, r)} | f(x) - f_I| \, dx
\]
where $f_I$ is the mean value of $f$ over $I$. Assuming that $f \in L^1$ we can focus on small scales, say $0 < r < \frac{1}{2}$,
and define the local weighted space of functions of bounded mean oscillations $\widetilde{bmo}_\phi$ as follows. 
For a positive non-decreasing function $\phi$ on $(0, \frac{1}{2})$ require that the quantity
\[
 \|f\|_{\widetilde{bmo}_\phi}= \|f\|_{L^1} + \sup_{x \in \mathbb{R}^3, 0 < r , \frac{1}{2}} \frac{\Omega(f, I(x,r)) }{\phi(r)}
\]
is finite. In the case of a power weight $\phi(r) = r^\alpha$ for $0 < \alpha \le 1$ it is known that being in $\widetilde{bmo}_\phi$ 
is equivalent to being in the local $\alpha$-H\"older/Lipschitz class. 

\medskip

Since we are already in a critical scenario, the weights of the most interest here are the ones that would allow for discontinuities
(due to Giga and Miura).
One can show that $\widetilde{bmo}_\phi$ contains discontinuous functions if and only if
\[
 \int_0^\frac{1}{2} \frac{\phi(r)}{r} \, dr = \infty
\]
and since the vorticity direction (being bounded) is trivially in  $\widetilde{bmo}_1$ the sequence of weights 
\[
 \phi_k(r) = \frac{1}{\log^k(|\log r|)}
\]
($\log^l$ denotes an l-fold composition) generates a reasonable scale of spaces to focus on (there are functions that grow 
at infinity slower than any log-composite and are for all practical purposes equivalent to a constant, e.g, inverse Ackerman function; 
for the time being it seems sensible to defocus
from this).

\medskip

\begin{figure}[h!]
  \centering
     \includegraphics[scale=0.25]{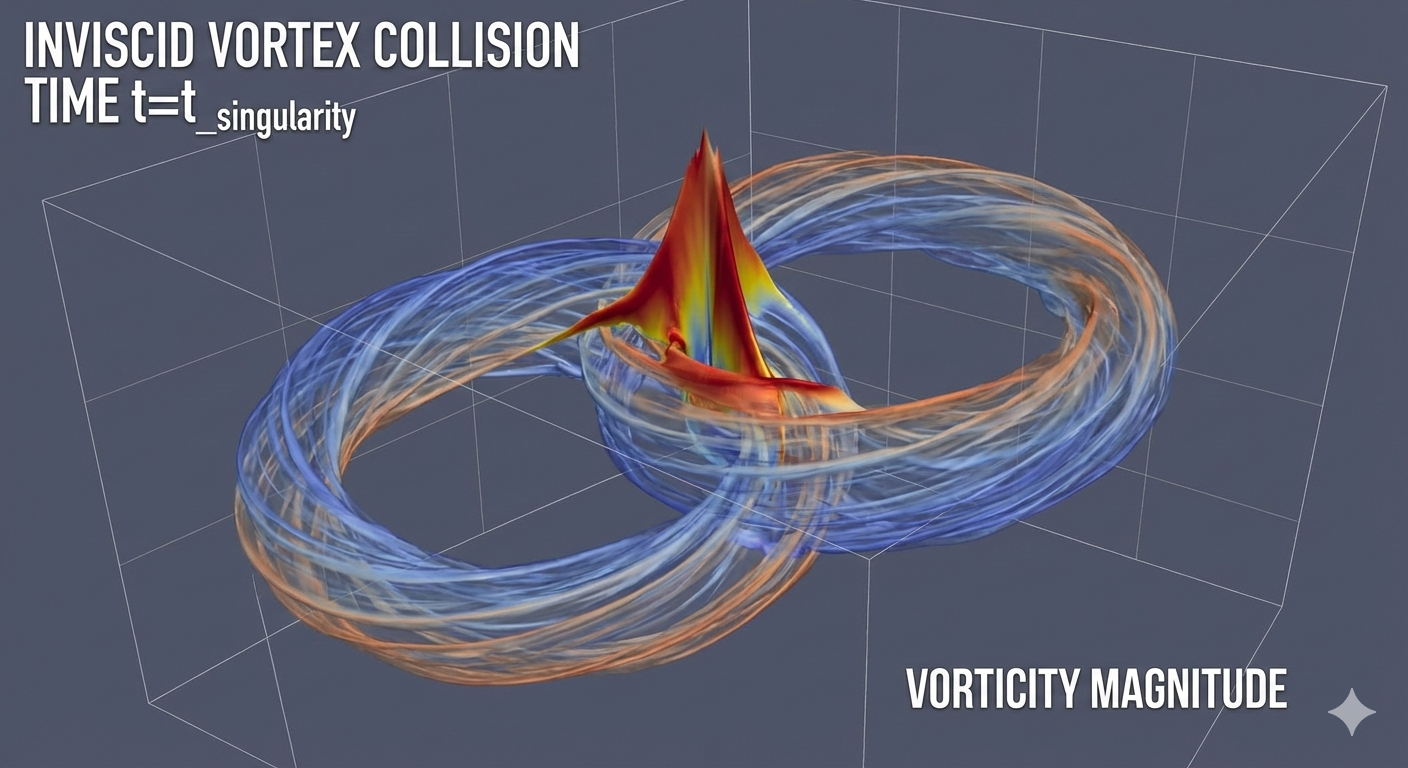}
  \caption{a sketch of the inviscid blow-up generated using Google AI tools}
\end{figure}

\medskip

This setting also points to another, more geometric/topological contrast between inviscid and viscous Moffatt-Kimura scenarios (compared to
no smoothing \emph{vs.} analytic smoothing). 
In the inviscid case -- due
to the lack of the shear stress -- the scenario of two counter-rotating vortex rings colliding at a nontrivial angle is
consistent with a formation of a `tent-like' structure (Figure 4) rising to a potential singularity which is in turn consistent with a simple jump discontinuity of the vorticity direction
field at the apex (Note that this is precisely a representative of the case where a
function would be in  $\widetilde{bmo}_1$ (in this case a trivial bound) but not in any of  $\widetilde{bmo}_\phi$ with the log-composite weights (in one
dimension, think about the signum function on  $(-r, r)$, the mean oscillation is exactly one, no dependence on $r$).
In contrast, in the viscous case, it is not unreasonable to expect that the shear stress-driven component of spatial dynamics 
near the potential singularity -- grinding of
topologically intricate local fluid layers (Figure 3) -- would be capable of taming the local mean oscillations just enough to 
nudge them over the trivial bound and into the log-composite $\widetilde{bmo}_\phi$ realm.

\medskip

In the second layer we observe that -- in the viscous Moffatt-Kimura scenario -- previous work \cite{Bradshaw2015, DFGX18} based on utilizing \emph{analytic cancellations in 
the vortex-stretching term} in the sense of compensated compactness in Hardy spaces to (logarithmically) improve upon the
\emph{a priori} bound on the $L^1$-norm of the vorticity obtained by Constantin \cite{Constantin1990} could be used to (logarithmically) improve
upon the scale of sparseness, breaking the criticality noted in the first layer. The key assumption
in \cite{DFGX18} was precisely that the vorticity direction belonged to one of the log-composite  $\widetilde{bmo}_\phi$ spaces.

\medskip

The note is organized as follows. Section 2 is devoted to the criticality scenario (layer one), Section 3 to depicting a possible road to 
logarithmic sub-criticality (layer two), and Section 4 presents a rigorous dissipation argument assuming logarithmic sub-criticality.

\section{Criticality}

On one hand, recall that in Moffatt-Kimura scenario of two counter-rotating vortex rings of the constant radii $R$ colliding at a nontrivial angle the maximal vorticity 
amplification rate at a tipping point and near the potential singular time is given (according to the reduced model) by
\[
 \mathcal{A}_\omega  = \frac{\omega_{max}(t)}{\omega_0} \approx \frac{\delta^2_0}{\delta^2(t)}
\]
where $\delta_0$ is the diameter of the vortex cores in the initial configuration, $\omega_0$ the initial axial velocity given by
$\displaystyle{\omega_0 = \frac{\Gamma}{4\pi \delta^2_0}}$ ($\Gamma$ and $-\Gamma$ are the initial circulations), $\delta(t)$ the diameter
of the vortex cores and $\omega_{max}(t)$ the vorticity maximum \cite{Moffatt2023}.
Solving for $\delta(t)$ gives 

\[
 \delta(t) \approx \frac{\Gamma^\frac{1}{2}}{\|\omega(t)\|_\infty^\frac{1}{2}}.
\]
$\delta(t)$ is a natural small (spatial) scale intrinsic to the model, the macro scale being the radii of the rings $R$. 
It is also comparable to the scale of sparseness of the vorticity super-level sets.

\medskip

 On the other hand, recall that a lower bound on the radius of spatial analyticity of the vorticity field is given by \cite{Bradshaw2019}
 
 \[ 
  \rho(t) \approx \frac{\nu^\frac{1}{2}}{\|\omega(t)\|_\infty^\frac{1}{2}}
 \]
where $\nu$ is the viscosity.

\medskip

In order to prevent the blow-up in this framework one needs $\displaystyle{\rho(t) \ge \delta(t)}$ as $t$ approaches the potential singular time and since
the dynamic quantities coincide (we are in a critical scenario) the scale of sparseness will fall into the dissipation range if

\[
 \frac{\Gamma}{\nu} \approx 1,
\]
i.e., only when the Reynolds number of the initial configuration is of order 1 (the battle of constants is not going all that great). 
However, since the scale of sparseness noted here is based on a stipulation
that the cores remain mostly compact and nearly circular, an assumption made in \cite{Moffatt2019R} but also challenged in \cite{Yao2020}
due to flattening and stripping of the cores during the viscous reconnection, there may be room for dynamic improvement.

\section{Beyond criticality?}

Let us start by observing that the scale of sparseness presented in the previous section (derived from the reduced model) can alternatively be obtained 
from the basic geometry of the
scenario and an \emph{a priori} bound on the decay of the volume of the vorticity super-level sets derived from the vorticity-velocity formulation
of the Navier-Stokes system.

\medskip

Recall that the vorticity analogue of Leray's \emph{a priori} $L^2$ bound on the velocity is Constantin's \emph{a priori} $L^1$ bound derived in 
\cite{Constantin1990}. 
The $L^1$ bound on the vorticity combined with a trivial bound on the distribution function,

\[
|\{x \in \mathbb{R}^3: \, |\omega(x,t)| > M\}| \le \frac{\|\omega(t)\|_1}{M}
\]
yields the following bound on the volume of the vorticity super-level sets relevant in the framework ($\lambda$ is a positive constant less than one the choice
of which depends on some other tuning parameters; a complete argument of this type will be given in Section 4)

\[
|\{x \in \mathbb{R}^3: \, |\omega(x,t)| > \lambda \|\omega(t)\|_\infty\}| \le \frac{c(\lambda, \|\omega_0\|_1, T)}{\|\omega(t)\|_\infty}
\]
for any $t$ in an interval $(0, T)$. Since the radii of the rings $R$ represent the macro scale in the model, the decay of the volume given
by the above inequality as the flow approaches a possible singularity will force the decay of the diameters of the vortex cores comparable
to $\displaystyle{\frac{1}{\|\omega(t)\|_\infty^\frac{1}{2}}}$, the same small scale that appeared in the previous section (as before
this is -- from the point of view of sparseness -- geometrically worst case scenario, the observed flattening and stripping
of the cores would only make it sparser).

\medskip

It is this alternative way of deriving an upper bound on the diameters of the vortex cores (from the full Navier-Stokes
system) that is amenable to further analysis -- more precisely,
a logarithmic improvement of the $L^1$ bound on the vorticity would imply a logarithmic improvement of the rate of decay of the volume of the
vorticity super-level sets which would -- in turn -- yield a logarithmic improvement on the upper bound on the vortex core diameters, breaking
the criticality. In other words, the problem is now reduced to obtaining a logarithmic improvement of the $L^1$ bound on the vorticity under
purely geometric conditions, a question of its independent interest. 

\medskip

The first contribution in this direction was presented in \cite{Bradshaw2015} where it was shown that as long as the vorticity direction is in 
$\displaystyle{\widetilde{bmo}_\frac{1}{|\log r|}}$ the vorticity magnitude will remain in $L Log L$. The proof was based on the duality
between local Hardy and BMO spaces, analytic cancellations in the vortex-stretching term in the sense of the `div-curl' lemma in
Hardy spaces, some results on multipliers in local BMO spaces and Coifman-Rochberg $BMO$ estimate on the logarithm of the \
maximal function.

\medskip

A natural follow up question was whether it was possible to generalize this result to the log-composite weights 
\[
 \phi_k(r) = \frac{1}{\log^k(|\log r|)},
\]
ideally for any positive integer $k$. Unfortunately there is an obstruction -- there is no analogue of Coifman-Rochberg estimate
in the weighted $BMO$ spaces (in the case of log-composite weights one can construct several types of counterexamples).
However, it is possible to derive a `dynamic version' of Coifman-Rochberg for a family of time-dependent functions bounded
above and below by algebraic rates modeling the build up of the vorticity field in the vicinity of 
a possible singularity (with 
no restrictions on the strength of the singularity/the rates) \cite{DFGX18}.  This was then used to obtain a generalization of the $L Log L$
result in this setting --  it was demonstrated that as long as the vorticity direction belongs to $\displaystyle{\widetilde{bmo}_{\phi_k(r)}}$
the vorticity magnitude will stay in
$L \frac{1}{\phi_k(L)}$. Similarly as in the $L^1$ case, this implies the following bound on the decay of the volume of the 
vorticity super-level sets of interest,

\begin{equation}\label{log}
|\{x \in \mathbb{R}^3: \, |\omega(x,t)| > \lambda \|\omega(t)\|_\infty\}| \le c(\lambda, \|\omega_0\|_{L \frac{1}{\phi_k(L)}}, T)  \, \frac{\phi_k (\|\omega(t)\|_\infty)}{\|\omega(t)\|_\infty}
\end{equation}
for any $t$ in an interval $(0, T)$.

\section{Dissipation via the harmonic measure maximum principle}

The purpose of this section is to provide a rigorous argument showing that as long as the bound (\ref{log}) holds, the $L^\infty$ norm of the vorticity
in the Moffatt-Kimura scenario will remain bounded and no singularity will form.

\medskip

Let us start with recalling definitions of local sparseness at scale suitable for the mathematical analysis of 
spatial intermittency \cite{Grujic2013}.

\begin{definition}
For a spatial point $x_0$ and $\delta\in (0,1)$, an open set $S$ is 1D $\delta$-sparse around $x_0$ at scale $r$ if there exists a unit vector $\nu$ such that
\begin{align*}
\frac{|S\cap (x_0-r\nu, x_0+r\nu|}{2r} \le \delta\ .
\end{align*}
\end{definition}

\begin{definition}
For a spatial point $x_0$ and $\delta\in (0,1)$, an open set $S$ is 3D $\delta$-sparse around $x_0$ at scale $r$ if
\begin{align*}
\frac{|S\cap B_r(x_0)|}{|B_r(x_0)|} \le \delta\ .
\end{align*}
\end{definition}

Note that local 3D $\delta$-sparseness at a scale implies 1D $(\delta)^\frac{1}{3}$-sparseness at the same scale; 
the converse is false.

\medskip

The sets of interest will be the vorticity super-level sets -- more precisely -- we will consider the following. 
For a time $t$ and a spatial point  $y \in \mathbb{R}^3$ there exists a vorticity component $\omega^{i, \pm}$ such that
$\omega^{i, \pm}(y, t) = |\omega(y, t)|$ (opting for the max norm in $\mathbb{R}^3$). The associated super-level set is
then defined by 
\begin{equation}\label{super}
 V^{i, \pm}_t(y) = \{x \in \mathbb{R}^3: \, \omega^{i, \pm}(x, t) > \lambda \|\omega(t)\|_\infty\}
\end{equation}
where $\lambda \in (0, 1)$ will be a suitably chosen parameter.

\medskip

Local sparseness at scale will be utilized via the harmonic measure maximum principle. Due to the rotational invariance of the equations
and locality of the argument one can always assume that the direction of local 1D sparseness is a coordinate direction. Hence
the setting of one complex variable suffices (in several complex variables, the harmonic measure would be replaced with the 
plurisubharmonic measure).
 
\begin{proposition}[\cite{Ransford1995}]\label{hm}
Let $\Omega$ be an open, connected set in $\mathbb{C}$ such that its boundary has nonzero Hausdorff dimension, and let $K$ be a Borel 
subset of the boundary. Suppose that $u$ is a subharmonic function on $\Omega$ satisfying
\begin{align*}
u(z) &\le M\ , \quad \textrm{for }z\in\Omega
\\
\limsup_{z\to\zeta} u(z) &\le m\ , \quad \textrm{for }\zeta\in K.
\end{align*}
Then
\begin{align*}
u(z)\le m \, h(z,\Omega,K) + M(1-h(z,\Omega,K)) \ , \quad \textrm{for }z\in\Omega.
\end{align*}
(Here, $h(z,\Omega,K)$ denotes the harmonic measure of $K$ with respect to $\Omega$, evaluated
at $z$.)
\end{proposition}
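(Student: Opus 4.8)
The statement is the classical two-constants theorem, so the plan is to deduce it from the Perron characterization of harmonic measure rather than to rebuild potential theory from scratch. The key observation is that $h(\cdot,\Omega,K)$ is, by definition, the Perron solution of the Dirichlet problem on $\Omega$ with boundary data $\mathbf{1}_K$, namely the infimum of the upper Perron class
\[
 \mathcal{U} = \{ v \text{ superharmonic on } \Omega : \liminf_{z \to \zeta} v(z) \ge \mathbf{1}_K(\zeta) \text{ for all } \zeta \in \partial\Omega \}.
\]
The hypothesis that $\partial\Omega$ has nonzero Hausdorff dimension is there to guarantee that $\partial\Omega$ is non-polar, so that $\Omega$ is Greenian and this Perron solution is a genuine, nondegenerate harmonic function; this is precisely what lets one speak of $h$ as a harmonic measure at all.

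First I would dispose of the degenerate case $M=m$: then $u \le M = m$ throughout $\Omega$, while the asserted right-hand side equals $m\,h + M(1-h) = M$, so the inequality is immediate. Hence I may assume $M > m$ and normalize.

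The heart of the argument is a single well-chosen comparison function. Set
\[
 v(z) = \frac{M - u(z)}{M - m}.
\]
Since $u$ is subharmonic, $v$ is superharmonic on $\Omega$, and since $u \le M$ on $\Omega$ we have $v \ge 0$. I would then verify the two boundary inequalities placing $v$ in the upper class $\mathcal{U}$: for $\zeta \in K$, the hypothesis $\limsup_{z\to\zeta} u(z) \le m$ gives $\liminf_{z\to\zeta} v(z) \ge (M-m)/(M-m) = 1$; for $\zeta \in \partial\Omega \setminus K$, nonnegativity of $v$ gives $\liminf_{z\to\zeta} v(z) \ge 0$. Thus $v \in \mathcal{U}$, and the defining minimality of the Perron solution yields $v \ge h$ on $\Omega$. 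Unwinding, $M - u \ge (M-m)h$, that is $u \le M - (M-m)h = m\,h + M(1-h)$, which is exactly the claim.

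I expect the only real subtlety to lie in the first paragraph: justifying that the Perron solution for the data $\mathbf{1}_K$ genuinely coincides with the harmonic measure $h$ and is nondegenerate. This is where the regularity encoded in the Hausdorff-dimension hypothesis on $\partial\Omega$ enters, ensuring $\Omega$ is Greenian and that the (polar) set of irregular boundary points does not obstruct membership of $v$ in the upper class, since the $\liminf$ conditions need only hold off such an exceptional set. Once the identification of the Perron solution with $h$ is in hand, the comparison step above is entirely routine.
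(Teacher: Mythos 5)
The paper offers no proof of this proposition; it is imported verbatim from Ransford's book (the classical two-constant theorem), so there is nothing internal to compare against. Your argument is the standard textbook proof and it is correct: the normalization $v=(M-u)/(M-m)$ is superharmonic, nonnegative, and satisfies the boundary $\liminf$ conditions, so it lies in the upper Perron class for $\mathbf{1}_K$ and hence dominates $h(\cdot,\Omega,K)$, which unwinds to the stated inequality; you also correctly identify that the Hausdorff-dimension hypothesis is only there to make $\partial\Omega$ non-polar so that the Perron solution exists and equals the harmonic measure. The only points worth a sentence more care are the trivial case $m>M$ (where the conclusion already follows from $u\le M$) and, for unbounded $\Omega$, treating $\infty$ as a boundary point in $\mathbb{C}_\infty$ --- your nonnegativity of $v$ handles that automatically, and in the paper's application $\Omega$ is a disc anyway.
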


\medskip

The following extremal property of the harmonic measure in the unit disc $\mathbb{D}$ will be helpful in the calculation.

\medskip

\begin{proposition}[\cite{Solynin1997}]\label{sol}
Let $\alpha$ be in $(0, 1)$, $K$ a closed subset of $[-1,1]$ 
such that $|K| = 2\alpha$,
and suppose that the origin is in $\mathbb{D} \setminus K$. Then
\[
 h(0,\mathbb{D},K) \ge h(0,\mathbb{D}, K_\alpha) =
 \frac{2}{\pi} \arcsin \frac{1-(1-\alpha)^2}{1+(1-\alpha)^2}
\]
where $K_\alpha = [-1, -1+\alpha] \cup [1-\alpha, 1]$.
\end{proposition}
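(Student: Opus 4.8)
I read $h(0,\mathbb{D},K)$ as the harmonic measure of the slit $K$ relative to the slit domain $\Omega=\mathbb{D}\setminus K$, i.e. the value at $0$ of the bounded harmonic function on $\Omega$ with boundary data $1$ on $K$ (from either side) and $0$ on $\partial\mathbb{D}$. The statement then splits into two essentially independent tasks: the explicit evaluation of $h(0,\mathbb{D},K_\alpha)$, and the extremal inequality $h(0,\mathbb{D},K)\ge h(0,\mathbb{D},K_\alpha)$ over all closed $K\subseteq[-1,1]$ with $|K|=2\alpha$. The formula is a computation; the extremality is where the content lies.

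For the evaluation I would exploit conformal invariance of harmonic measure through an explicit chain of elementary maps. Writing $\beta=1-\alpha$ and $a=\beta^2$, the set $K_\alpha=[-1,-\beta]\cup[\beta,1]$ and the data are invariant under $z\mapsto-z$, so by uniqueness the harmonic measure is an even function of $z$ and descends through $\xi=z^2$ to a bounded harmonic function on $\mathbb{D}\setminus[a,1]$ with boundary values $1$ on $[a,1]$ and $0$ on $\partial\mathbb{D}$ (the puncture at $\xi=0$ being removable). This already gives $h(0,\mathbb{D},K_\alpha)=h(0,\mathbb{D},[a,1])$, reducing the two slits to one. I would then flatten the single-slit disc onto a strip: the M\"obius map $z\mapsto\frac{1+z}{1-z}$ carries $\mathbb{D}$ to the right half-plane and the slit to $[A,\infty)$ with $A=\frac{1+a}{1-a}$; squaring opens this to the plane cut along $(-\infty,0]\cup[A^2,\infty)$; an affine normalization moves the two cut-tips to $\mp1$; and finally $\arcsin$ maps $\mathbb{C}\setminus((-\infty,-1]\cup[1,\infty))$ conformally onto the strip $\{|\operatorname{Re}w|<\pi/2\}$, sending the two cuts to its two edges. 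On the strip the harmonic measure of one edge is the affine function $\tfrac12+\tfrac1\pi\operatorname{Re}w$; tracking the base point ($0\mapsto1\mapsto1\mapsto\frac{2}{A^2}-1$) and simplifying via $\sin(2\gamma-\tfrac\pi2)=2\sin^2\gamma-1$ returns exactly $\tfrac2\pi\arcsin\frac{1-\beta^2}{1+\beta^2}$. The only care here is bookkeeping of the base point through the four maps.

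For the extremality I would phrase it as minimization of $h(0,\mathbb{D}\setminus K,K)$ over the admissible class and attack it by symmetrization. The heuristic is clear: moving the mass of the slit farther from the origin should lower its harmonic measure at the origin, and $K_\alpha$ piles all the mass at the two extreme ends $\pm1$; so I would try to transform an arbitrary admissible $K$ into $K_\alpha$ by a sequence of length-preserving rearrangements along the diameter, each of which does not increase $h(0,\cdot)$, and pass to the limit. Polarization (two-point symmetrization) is the natural mechanism, since harmonic measure is monotone under polarization and the origin can be kept on the non-privileged side, where the value does not increase.

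The hard part is that the constraint is \emph{Euclidean} length $|K|=2\alpha$ on the diameter, whereas the rearrangements that preserve the disc (reflections in circles orthogonal to $\partial\mathbb{D}$) distort Euclidean length, and the rearrangements that preserve Euclidean length (reflections in vertical lines) push mass out of the disc. Reconciling the measure constraint with a disc-compatible symmetrization is exactly the delicate point that makes this a genuine theorem rather than a routine application of polarization, and it is where Solynin's argument does its work. A variational alternative would be to establish existence of a minimizer by compactness in the Hausdorff metric together with lower semicontinuity of harmonic measure, and then use a Hadamard-type variation formula -- the first-order change of $h(0,\cdot)$ under displacing a small piece of the slit being governed by the harmonic-measure density -- to show that any minimizer must have all its mass at the endpoints, hence equal $K_\alpha$; the obstacle there migrates to controlling the variational formula near the slit tips and to ruling out mass escaping to a lower-dimensional limit.
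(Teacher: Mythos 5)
The paper does not prove this proposition at all --- it is imported verbatim from Solynin's paper \cite{Solynin1997} and used as a black box --- so there is no in-paper argument to compare against; the only question is whether your attempt stands on its own. It does not, and you essentially say so yourself. Your evaluation of $h(0,\mathbb{D},K_\alpha)$ is correct and complete: the $z\mapsto -z$ symmetry and descent through $\xi=z^2$ to the single slit $[a,1]$ with $a=(1-\alpha)^2$, the chain of maps to the strip, and the base-point bookkeeping all check out (indeed $\frac{2}{A^2}-1=\frac{1-6a+a^2}{(1+a)^2}=2s^2-1$ with $s=\frac{1-a}{1+a}$, which via $\sin(2\gamma-\frac{\pi}{2})=2\sin^2\gamma-1$ gives exactly $\frac{2}{\pi}\arcsin\frac{1-(1-\alpha)^2}{1+(1-\alpha)^2}$). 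But that is the easy half. The actual content of the proposition is the extremal inequality $h(0,\mathbb{D},K)\ge h(0,\mathbb{D},K_\alpha)$ over all closed $K\subset[-1,1]$ with $|K|=2\alpha$, and for that you offer only a heuristic (``mass far from the origin is worse'') plus two candidate strategies --- polarization and a Hadamard-type variation --- each of which you explicitly concede breaks down at the decisive point: the length-preserving rearrangements are not disc-compatible, and the variational route needs existence, semicontinuity, and control of the variation at the slit tips, none of which you supply. Naming the obstruction is not the same as overcoming it; as written, the inequality is asserted, not proved.

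Concretely, the missing ingredient is a rearrangement step that simultaneously (i) preserves Lebesgue measure on the diameter, (ii) maps admissible configurations to admissible configurations inside $\mathbb{D}$, and (iii) is provably monotone for $h(0,\cdot)$ --- this is precisely what Solynin's ``ordering of sets'' machinery delivers (in the spirit of Beurling-type shove theorems, where mass on a radius is pushed toward the outer endpoint without increasing harmonic measure at the center, here applied symmetrically to both halves of the diameter). Since the paper's Theorem 4.6 only uses the proposition as a quantitative lower bound on the harmonic measure of the sparse complement, citing \cite{Solynin1997} is legitimate for the paper's purposes; but if your goal was a self-contained proof, you have proved the identity and left the inequality open.
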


\medskip

Lastly, we need a local-in-time lower bound on the radius of spatial analyticity of the vorticity field in $L^\infty$.

\medskip

\begin{theorem}[\cite{Bradshaw2019}]\label{radius}
Let $\omega_0 \in L^\infty$ and $M \in (1, \infty)$. Then there exists a unique mild solution to the vorticity formulation
of the 3D HD NS system $\omega \in C_w([0, T], L^\infty)$ where
\[
  T \ge \frac{1}{c_1(M)} \frac{\nu}{\|\omega_0\|_\infty}
\]
and for any $t \in (0, T]$ the solution $\omega$ is the $\mathbb{R}^3$-restriction of a holomorphic function $\omega$ defined in
\[
 \Omega_t = \biggl\{ x+iy \in \mathbb{C}^3: \, |y|  <  \frac{1}{c_2(M)} t^\frac{1}{2} \biggr\}
\]
satisfying $\|\omega(t)\|_{L^\infty(\Omega_t)}\le M \|\omega_0\|_\infty$.
\end{theorem}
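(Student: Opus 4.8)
The plan is to construct the solution directly as a mild (Duhamel) solution of the \emph{complexified} vorticity equation and to read off the analyticity radius from the holomorphy domain of the heat kernel. Using $\nabla\cdot u=0$ to write the convection and vortex-stretching terms in divergence form, $(u\cdot\nabla)\omega-(\omega\cdot\nabla)u=\nabla\cdot(u\otimes\omega-\omega\otimes u)$ with $u=K\ast\omega$ the Biot--Savart velocity, the equation becomes the fixed-point problem
\[
\omega(t)=e^{\nu t\Delta}\omega_0+\int_0^t e^{\nu(t-s)\Delta}\,\nabla\cdot\bigl(u\otimes\omega-\omega\otimes u\bigr)(s)\,ds=:e^{\nu t\Delta}\omega_0+B(\omega,\omega)(t).
\]
First I would set up the solution space: for a parameter $\delta=\delta(M)>0$ to be fixed, let $X$ consist of maps $t\mapsto\omega(\cdot,t)$ that, for each $t\in(0,T]$, extend holomorphically to the strip $\Omega_t=\{x+iy:\ |y|<\delta\,t^{1/2}\}$, normed by $\|\omega\|_X=\sup_{0<t\le T}\sup_{\Omega_t}|\omega(\cdot,t)|$, and I would seek a fixed point in the ball $\{\|\omega\|_X\le M\|\omega_0\|_\infty\}$.

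The engine of the argument is the holomorphic extension of the Gaussian. Complexifying $x\mapsto z=x+iy$ gives $|G_{\nu t}(z)|=(4\pi\nu t)^{-3/2}e^{-(|x|^2-|y|^2)/4\nu t}$, hence $\int_{\mathbb{R}^3}|G_{\nu t}(x+iy-\xi)|\,d\xi=e^{|y|^2/4\nu t}$, with an extra factor $(\nu t)^{-1/2}$ for $\nabla G_{\nu t}$. Two consequences I would extract: (i) the free evolution obeys $\sup_{\Omega_t}|e^{\nu t\Delta}\omega_0|\le e^{\delta^2/4\nu}\|\omega_0\|_\infty$, so choosing $\delta=\delta(M)$ (equivalently $c_2(M)$) with $e^{\delta^2/4\nu}<M$ keeps it inside the ball; (ii) this same choice forces $\delta\lesssim\nu^{1/2}$, and since $t^{1/2}-s^{1/2}\le(t-s)^{1/2}$, convolution against $G_{\nu(t-s)}$ then maps functions holomorphic on $\Omega_s$ into functions holomorphic on $\Omega_t$, with the derivative in $B$ supplying the smoothing gain $(\nu(t-s))^{-1/2}$.

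Combining these, the bilinear term obeys schematically
\[
\|B(\omega,\omega)\|_X\lesssim \int_0^T \bigl(\nu(t-s)\bigr)^{-1/2}\,\sup_{\Omega_s}|u(s)|\,\sup_{\Omega_s}|\omega(s)|\,ds\lesssim \nu^{-1/2}T^{1/2}\,\|\omega\|_X^2,
\]
and balancing $\nu^{-1/2}T^{1/2}\bigl(M\|\omega_0\|_\infty\bigr)$ against the ball radius $M\|\omega_0\|_\infty$ fixes the existence time and yields the lower bound on $T$ asserted in the theorem. The estimate for the difference $B(\omega,\omega)-B(\tilde\omega,\tilde\omega)$ is identical, so the map is a contraction on the ball, giving existence and uniqueness; weak-$*$ continuity $\omega\in C_w([0,T],L^\infty)$ then follows from the uniform bound together with continuity of the Duhamel integrand, and the holomorphy radius $\delta t^{1/2}$ is exactly the claimed analyticity radius with $\|\omega(t)\|_{L^\infty(\Omega_t)}\le M\|\omega_0\|_\infty$.

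The delicate point — and the step I expect to be the main obstacle — is controlling the Biot--Savart factor $\sup_{\Omega_s}|u(s)|$ (and, for the sharp timescale, the stretching factor governed by $\nabla u$) in terms of $\|\omega\|_X$ within the holomorphic strip. The difficulty is the borderline behavior of Biot--Savart on $L^\infty$: the velocity gradient $\nabla u=\nabla K\ast\omega$ is a Calder\'on--Zygmund transform of $\omega$ and is therefore only $BMO$, not $L^\infty$, when $\omega\in L^\infty$. The divergence form above is designed precisely to sidestep this, placing the derivative on the smoothing kernel and leaving only $u$ itself to be bounded pointwise; the remaining task is to show that $K\ast\omega$ extends holomorphically to $\Omega_s$ with $\sup_{\Omega_s}|u|\lesssim\|\omega\|_X$. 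Because $|x|^{-2}$ fails to be integrable at infinity in $\mathbb{R}^3$, I would split the complexified kernel into near and far parts and supplement the $L^\infty$ bound with the \emph{a priori} $L^1$ control of the vorticity (Constantin's bound, available in the regime of interest), which also supplies the length scale that the crude velocity estimate is missing. Carrying this $L^1\cap L^\infty$ bookkeeping through the complex strip, and checking that the holomorphy domain of the singular kernel is compatible with $\Omega_s$ for $\delta\lesssim\nu^{1/2}$, is where the real work lies; once it is in place, the contraction closes.
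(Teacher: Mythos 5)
This statement is imported verbatim from \cite{Bradshaw2019}; the note contains no proof of it, so your attempt has to be measured against the argument in that reference rather than against anything in this paper. Your architecture --- complexified mild formulation with the nonlinearity in divergence form $\nabla\cdot(u\otimes\omega-\omega\otimes u)$, the Gaussian bound $\int_{\mathbb{R}^3}|G_{\nu\tau}(x+iy-\xi)|\,d\xi=e^{|y|^2/4\nu\tau}$, strips $|y|<\delta t^{1/2}$ with $\delta\simeq\nu^{1/2}$ propagated through Duhamel via $t^{1/2}-s^{1/2}\le(t-s)^{1/2}$, and a contraction in the ball $\|\omega\|_X\le M\|\omega_0\|_\infty$ --- is exactly the Gruji\'c--Kukavica-type scheme that the cited proof runs, so the skeleton is right.

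The gap is the one you flag yourself, and it cannot be deferred: without a bound on $\sup_{\Omega_s}|u(s)|$ the scheme produces neither the contraction nor the stated existence time. As written, your estimate $\|B(\omega,\omega)\|_X\lesssim\nu^{-1/2}T^{1/2}\|\omega\|_X^2$ silently identifies $\sup|u|$ with $\sup|\omega|$, which is dimensionally impossible ($u$ is a velocity, $\omega$ an inverse time); balancing $\nu^{-1/2}T^{1/2}\sup|u|\,\sup|\omega|$ against $M\|\omega_0\|_\infty$ honestly yields the \emph{velocity} timescale $T\lesssim\nu/\|u\|_\infty^2$, not the vorticity timescale asserted in the theorem. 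Your proposed repair --- estimating $u=K\ast\omega$ on the complex strip by a near/far splitting of the kernel together with Constantin's \emph{a priori} $L^1$ bound --- would not recover the theorem as stated: it injects $\|\omega\|_{L^1}$, and hence an extraneous length scale, into the constants $c_1(M)$ and $c_2(M)$, and Constantin's bound is an a priori estimate for the real-variable solution of the actual equation, not something available for the complexified iterates inside a fixed-point argument. The cited proof avoids Biot--Savart on $L^\infty$ altogether: the velocity is evolved by its own complexified Duhamel formula $u=e^{\nu t\Delta}u_0+\int_0^t e^{\nu(t-s)\Delta}\,\mathbb{P}\,\nabla\cdot(u\otimes u)\,ds$, and the contraction is closed for the pair $(u,\omega)$ simultaneously, yielding $\sup_{\Omega_t}|u|\le M\|u_0\|_\infty$ on the same footing as the vorticity bound. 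If you adjoin that second equation to your fixed-point system, your outline closes; without it, the key estimate --- and with it the claimed $T$ and the bound $\|\omega(t)\|_{L^\infty(\Omega_t)}\le M\|\omega_0\|_\infty$ --- is missing.
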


\medskip

\medskip

In what follows the choice of the cut-off parameter $\lambda$ and the sparseness parameter $\delta$ will have to be consistent 
with the choice to be made in the application of the harmonic measure maximum principle -- with that in mind, set 
$\delta=\frac{3}{4}$ and $\lambda=\frac{1}{2M}$ where $M$ is the solution to 
\begin{equation}\label{M}
 \frac{1}{2} h^* + (1-h^*) M = 1 \ \ \mbox{and} \ \ h^*=\frac{2}{\pi} \arcsin \frac{1-\frac{3}{4}^\frac{2}{3}}{1+\frac{3}{4}^\frac{2}{3}}
\end{equation}
and -- at the same time -- $M$ to be used in the application of Theorem \ref{radius} (it is easy to check that $M>1$).

\medskip

In addition, it will be convenient to have a designation of `escape time'. Let $\omega \in C_w\bigl( [0, T^*), L^\infty\bigr)$
where $T^*$ is the first blow-up time. A time $t$ is an \emph{escape time} provided $\|\omega(s)\|_\infty > \|\omega(t)\|_\infty$
for any $s \in (t, T^*)$ (local-in-time well-posedness in $L^\infty$ implies that for any level there exists a
unique escape time).

\medskip

All tools for the proof of our result are now collected.

\begin{theorem}\label{main}

Consider Moffatt-Kimura scenario in the viscous case and suppose that the \emph{a priori} bound (\ref{log}) holds. 
Then the vorticity magnitude remains bounded and a finite time blow-up is avoided. 

\end{theorem}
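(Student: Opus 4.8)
The plan is to show that the a priori bound (\ref{log}) forces the scale of sparseness of the vorticity super-level sets into the dissipation range near any putative blow-up time, so that the harmonic measure maximum principle caps the growth of $\|\omega(t)\|_\infty$ and contradicts blow-up. The strategy has three movements: convert the volume decay in (\ref{log}) into local $1$D sparseness at a quantified scale; use the analyticity radius from Theorem \ref{radius} together with Propositions \ref{hm} and \ref{sol} to manufacture a contradiction at an escape time; and arrange the constants $\delta=\frac34$, $\lambda=\frac{1}{2M}$, $M$ as in (\ref{M}) so the two mechanisms are compatible.

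First I would suppose, for contradiction, that $T^*<\infty$ is the first blow-up time. Fix a level and let $t$ be the associated escape time, so $\|\omega(s)\|_\infty>\|\omega(t)\|_\infty$ for all $s\in(t,T^*)$. From (\ref{log}), the super-level set $V^{i,\pm}_t(y)$ of (\ref{super}) has volume at most $c\,\phi_k(\|\omega(t)\|_\infty)/\|\omega(t)\|_\infty$. A covering/pigeonhole argument then yields a scale $r=r(t)$ at which $V^{i,\pm}_t(y)$ is $3$D $\delta$-sparse around $y$, and hence $1$D $\delta^{1/3}$-sparse along some coordinate direction; with $\delta=\frac34$ chosen to match (\ref{M}), the sparse fraction is exactly the $\alpha$ feeding Proposition \ref{sol} (note $\left(\frac34\right)^{2/3}$ appearing in $h^*$). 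The key quantitative point is that this sparseness scale $r(t)$ is controlled by $\phi_k(\|\omega(t)\|_\infty)^{1/3}\,\|\omega(t)\|_\infty^{-1/2}$, i.e.\ the critical scale $\|\omega(t)\|_\infty^{-1/2}$ from Section 2 improved by the log-composite factor $\phi_k^{1/3}$ coming from the sub-critical volume decay.

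Next I would compare this with the analyticity radius. By Theorem \ref{radius}, starting the local-in-time evolution at the escape-time level $\|\omega(t)\|_\infty$ gives a radius of spatial analyticity $\rho\sim \nu^{1/2}/\|\omega(t)\|_\infty^{1/2}$ on which $\|\omega\|_{L^\infty(\Omega)}\le M\|\omega(t)\|_\infty$. On the sparse segment I would apply Proposition \ref{hm} with $u$ the (subharmonic) modulus of the holomorphic extension of $\omega^{i,\pm}$, restricted to the relevant complex line: take $M\|\omega(t)\|_\infty$ as the global bound and $\lambda\|\omega(t)\|_\infty=\frac{1}{2M}\|\omega(t)\|_\infty$ as the boundary bound on the complement $K$ of the super-level set within the disc of radius $\rho$. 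Proposition \ref{sol} lower-bounds the harmonic measure $h(0,\mathbb D,K)\ge h^*$, and then (\ref{M}) is precisely the algebraic identity that makes the resulting bound read $\omega^{i,\pm}(y,t)\le \|\omega(t)\|_\infty$. Since $y$ was a point where $\omega^{i,\pm}(y,t)=\|\omega(t)\|_\infty$, this is an equality-at-best that — pushed slightly past the escape time, where the norm has strictly increased — contradicts the definition of escape time, showing $\|\omega(t)\|_\infty$ cannot grow.

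The step I expect to be the main obstacle is making the scale-matching fully rigorous: the analyticity radius $\rho$ from Theorem \ref{radius} must be at least the sparseness scale $r(t)$ \emph{uniformly} as $t\to T^*$, so that the harmonic measure argument lives on a disc on which both the subharmonic bound and the sparse boundary data are simultaneously valid. This is exactly where the log-composite improvement is used: at criticality $\rho\sim r(t)$ (the battle of constants of Section 2), whereas the factor $\phi_k^{1/3}\to 0$ tips the inequality $\rho\ge r(t)$ decisively in our favor for all large enough norm, uniformly in $t$. I would need to verify that the constant $c(\lambda,\|\omega_0\|_{L\frac{1}{\phi_k(L)}},T)$ in (\ref{log}), the viscosity, and the constants $c_1(M),c_2(M)$ from Theorem \ref{radius} combine so that this strict inequality holds on a full interval $(t_0,T^*)$; once that is secured, the contradiction closes and the blow-up is ruled out.
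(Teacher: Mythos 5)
Your overall architecture matches the paper's: escape times, sparseness of the super-level sets extracted from (\ref{log}), the analyticity radius of Theorem \ref{radius}, and the harmonic measure maximum principle (Propositions \ref{hm} and \ref{sol}) calibrated by (\ref{M}). However, there are two concrete gaps. The first is the time bookkeeping of the contradiction. You run the sparseness and the harmonic measure estimate at the escape time $t$ itself and conclude $\omega^{i,\pm}(y,t)\le\|\omega(t)\|_\infty$, which is vacuous, and the subsequent ``pushed slightly past the escape time'' step is not an argument. The paper instead evolves the solution from $t$ via Theorem \ref{radius} up to the advanced time $s=t+T_t$, at which the full analyticity radius $\rho_s\sim\nu^{1/2}\|\omega(s)\|_\infty^{-1/2}$ and the uniform bound $\|\omega\|_{L^\infty(\Omega_s)}\le M\|\omega(t)\|_\infty$ are simultaneously available; the super-level set, its sparseness, and the harmonic measure estimate are all taken \emph{at time $s$}. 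The boundary datum on $K$ is then $\lambda\|\omega(s)\|_\infty=\frac{1}{2M}\|\omega(s)\|_\infty\le\frac12\|\omega(t)\|_\infty$ (not $\frac{1}{2M}\|\omega(t)\|_\infty$ as you wrote -- note that (\ref{M}) carries the coefficient $\frac12$, not $\frac{1}{2M}$), the interior bound is $M\|\omega(t)\|_\infty$, and (\ref{M}) yields $|\omega(x_0,s)|\le\|\omega(t)\|_\infty$ for every $x_0$, hence $\|\omega(s)\|_\infty\le\|\omega(t)\|_\infty$, which genuinely contradicts $t$ being an escape time. You also need, as the paper does, to dispose separately of the case $0\in K$, where Proposition \ref{sol} does not apply but the threshold bound $\frac{1}{2M}\|\omega(s)\|_\infty\le\frac12\|\omega(t)\|_\infty$ already closes the argument.

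The second gap is the derivation of the sparseness scale. A covering/pigeonhole argument using only the volume bound (\ref{log}) gives $3$D $\delta$-sparseness at scale $r\sim\bigl(\phi_k(\|\omega\|_\infty)/\|\omega\|_\infty\bigr)^{1/3}$, which for large $\|\omega\|_\infty$ is \emph{much larger} than the analyticity radius $\rho\sim\nu^{1/2}\|\omega\|_\infty^{-1/2}$; the comparison $\rho\ge r$ would then fail and the whole mechanism collapses. The exponent $\frac12$ in the paper's $r_s=c_4\bigl(\phi_k(\|\omega(s)\|_\infty)/\|\omega(s)\|_\infty\bigr)^{1/2}$ is not a covering statement: it uses the geometry of the Moffatt--Kimura scenario, namely that the region of intense vorticity is a pair of tubular cores of macroscopic length comparable to $R$, so the core diameter scales like $(\mathrm{Vol}/R)^{1/2}$. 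Your asserted scale $\phi_k^{1/3}\|\omega\|_\infty^{-1/2}$ is unsupported by either route, and omitting the geometric input leaves the decisive inequality $\rho_s\ge r_s$ unproven. Once the tube geometry is invoked, the $\phi_k^{1/2}$ factor (any positive power of $\phi_k$ would do) tips the comparison as you intend, and the existence of an escape time $t$ with $\rho_s\ge r_s$ follows from the assumed blow-up.
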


\begin{proof}

Let $M$ be as in (\ref{M}) and $t$ an escape time. Solve the 3D NS system at $t$ according
to Theorem \ref{radius} with the same choice of $M$, and let $s=t+T_t$ where $T_t$ is the maximal time of existence
guaranteed by the theorem. The the solution at $s$ is analytic with the uniform radius of analyticity of at least

\[
 \frac{1}{c_3} \frac{\nu^\frac{1}{2}}{\|\omega(t)\|_\infty^\frac{1}{2}};
\]
since $t$ is an escape time the lower bound at $s$ could be replaced with

\begin{equation}\label{rho}
\rho_s =  \frac{1}{c_3} \frac{\nu^\frac{1}{2}}{\|\omega(s)\|_\infty^\frac{1}{2}}.
\end{equation}

\medskip

Next, note that the bound (\ref{log}) (taking any $T$ greater than the potential blow-up time) and the geometry
of Moffatt-Kimura scenario imply that for any $y \in \mathbb{R}^3$ 
the scale of 3D sparseness 
of the super-level sets $V^{i, \pm}_s(y)$ defined in (\ref{super}) is given by
\begin{equation}\label{r}
  r_s =  c_4(\|\omega_0\|_{L \frac{1}{\phi_k(L)}}, R) \, \biggl(\frac{\phi_k (\|\omega(s)\|_\infty)}{\|\omega(s)\|_\infty}\biggr)^\frac{1}{2}
\end{equation}
(with the choice of the parameters made in (\ref{M}).

\medskip

At this point we make a choice of the escape time $t$ to be an escape time for which $\rho_s \ge r_s$;
then $\omega(s)$ is both analytic and 1D $\frac{3}{4}^\frac{1}{3}$-sparse at scale $r_s$.

\medskip

Let $x_0 \in \mathbb{R}^3$ be arbitrary. We aim to show $|\omega(x_0, s)| \le \|\omega(t)\|_\infty$; this would
contradict $t$ being an escape time and conclude the argument.

\medskip

Due to the translational and rotational invariance of the equations, we can assume that $x_0$ is the origin and 
the direction of local 1D sparseness is the coordinate direction $e_1$. Immerse $e_1$ in the complex plane
and consider
\[
 D_{r_s} = \{ z \in \mathbb{C}: \, |z|<r_s\}.
\]

Since each $\omega_i^\pm(\cdot, s)$ is subharmonic on $D_{r_s}$ the stage is set for an application of the 
harmonic measure maximum principle (Proposition \ref{hm}). Let $\omega_i^\pm$ be the local (at 0) maximal 
component, i.e.,  $\omega_i^\pm(0, s) = |\omega(0, s)|$, and recall that the corresponding super-level set
$V_s^{i, \pm} = V_s^{i, \pm}(0)$ is 1D $\frac{3}{4}^\frac{1}{3}$-sparse at scale $r_s$.

\medskip

Next, define a compact set $K$ to be the complement in $[-r_s, r_s]$ of the set $V_s^{i, \pm} \cap (-r_s, r_s)$,
and note that -- due to sparseness -- $|K| \ge 2r_s \bigl( 1-\frac{3}{4}^\frac{1}{3}\bigr)$. 

\medskip

In order to apply the estimate on the harmonic measure given in Proposition \ref{sol}, we need to consider 
the case $0 \in K$ separately. This is straightforward since in this case
\[
 |\omega(0, s)| = \omega_i^\pm(0, s) \le \frac{1}{2M} \|\omega(s)\|_\infty \le \frac{1}{2} \|\omega(t)\|_\infty
\]
and we obtain a contradiction with $t$ being an escape time (the last inequality follows from the bound 
on the vorticity given in Theorem \ref{radius}).

\medskip

In the case $0 \notin K$, since the harmonic measure is invariant with respect to $z \to \frac{1}{r_s} z$, 
Proposition \ref{sol} yields
\[
 h(0, D_{r_s}, K) \ge \frac{2}{\pi} \arcsin \frac{1-\frac{3}{4}^\frac{2}{3}}{1+\frac{3}{4}^\frac{2}{3}} 
\]
which is precisely $h^*$ in (\ref{M}). Hence, Proposition \ref{hm} implies the following bound 
\[
 |\omega(0, s)|=\omega_i^\pm(0, s) \le h^* \frac{1}{2}\|\omega(t)\|_\infty+(1-h^*)M\|\omega(t)\|_\infty = \|\omega(t)\|_\infty
\]
by the choice of parameters made in (\ref{M}), and we obtain a contradiction again. This completes the proof.

\end{proof}

\section{Conclusion}

The main goal of this note was to propose a viscous two-layer mechanism for avoiding a finite time singularity formation
in Moffatt-Kimura scenario of two counter-rotating vortex rings colliding at a non-trivial angle. 

\medskip

It is worth emphasizing that being in the viscous case is crucial for both layers.
The first layer is built on having a lower bound on the radius of spatial analyticity stemming from local-in-time
analytic smoothing of the Navier-Stokes system. In the Euler case there is no smoothing and the only way to generate a
local-in-time analytic solution is to start with the analytic initial data but even then the lower bound is inadequate
(decreasing instead of increasing).
The second layer hinges on a stipulation that the shear stress component of topologically intricate dynamics 
of the viscous reconnection -- especially at high Reynolds numbers (Figure 3) -- is capable of slowing down the local oscillations
of the vorticity direction just enough to tip them over from being bounded in mean (a trivial bound) to having
a log-composite decay in mean where the number of composites can be arbitrary large. 
This is in contrast with the inviscid case where -- due to the lack of the shear stress -- the vorticity direction in 
Moffatt-Kimura scenario is expected to exhibit a simple jump discontinuity at the apex (Figure 4) and no decay of the local mean
oscillations saturating the trivial bound.

\medskip

As far as rigorous justification of the mechanism goes -- mathematical analysis 
based on the vorticity-velocity formulation of the Navier-Stokes system (rather than the reduced ODE 
model) taking into account only the bare-bones geometry identifying the scenario -- what remains 
to be shown is that the viscous mechanics will indeed yield a log-composite bound on the rate of decay 
of the bounded mean oscillations of 
the vorticity direction field. 
This is challenging but worth pursuing since the mechanism could potentially be adopted to other 
physically plausible scenarios for finite time singularity formulation based on amplifying the vorticity magnitude via the process of
vortex stretching. This last piece of the puzzle is a focus of current research.

\medskip

One last comment is to note that if this mechanism was realized in a particular flow, it might be impossible 
to detect in a computational simulation. Namely, recall that the condition for the crossover into the sub-critical 
regime is that a lower bound on the radius
of spatial analyticity $\rho_s$ (\ref{rho}) dominates an upper bound on the scale of sparseness $r_s$ (\ref{r}))
near the potential singular time
which unravels to
\[
 \log^{k+1} \bigl(\|\omega(s)\|_\infty)\bigr) \approx \frac{1}{\nu^\frac{1}{2}}.
\]
Consequently the level of
the vorticity magnitude at which the crossover into the sub-critical regime took place ( $\|\omega(s)\|_\infty$ )
would be of the order of a tetration of height $k+1$ (with base $e$ and $\nu^{-\frac{1}{2}}$ on top), and the dissipation scale to which 
the flow would have to be resolved 
would essentially be of the order of its reciprocal,
\[
   \frac{\nu^\frac{1}{2}}{\|\omega(s)\|_\infty^\frac{1}{2}} \approx \sqrt{ \frac{\nu}{e^{e^{\iddots^{e^{(\nu^{-\frac{1}{2})}}}}}}},
\]
which -- for $k$ large enough -- would be outside of the range of computational feasibility.
In other words -- in this scenario -- a simulation run would likely
remain in the critical regime in which the battle of constants was already lost.

\begin{remark}
The analysis in this note so far did not take into account the blow-up rate intrinsic to the Moffatt-Kimura scenario,
primarily to make it applicable to a wider class of singularity formation scenarios based on the amplification of the vorticity magnitude
via the mechanics of vortex stretching. In the Moffatt-Kimura scenario \cite{Moffatt2019}, the blow-up rate is consistent with the 
formation of the critical singular profile,
\[
 |u(x)| \approx \frac{1}{|x|}, \ \ \ \ \  |\omega(x)| \approx \frac{1}{|x|^2}
\]
which is consistent with $\omega \in L^\infty \bigl((0, T^*), L^{3/2, \infty}\bigr)$ where $L^{3/2, \infty}$ is a Lorentz space
(weak $L^{3/2}$). Denoting $\sup_{t \in (0, T^*)} \|\omega(t)\|_{L^{3/2, \infty}}$ by $c_0$, we have
\[
|\{x \in \mathbb{R}^3: \, |\omega(x,t)| > M\}| \le \frac{c_0^{3/2}}{M^{3/2}}
\]
which -- in turn -- implies
\[
|\{x \in \mathbb{R}^3: \, |\omega(x,t)| > \lambda \|\omega(t)\|_\infty\}| \le \frac{c(\lambda, c_0)}{\|\omega(t)\|_\infty^{3/2}}.
\]
Pairing this with the basic geometry of the Moffatt-Kimura scenario yields an upper bound on the local scale of
sparseness of the order of
\[
 \frac{1}{\|\omega(t)\|_\infty^\frac{3}{4}}
\]
which falls deeply (algebraically) into the dissipation range delineated by the lower bound on the radius of spatial analyticity
of the order of
\[
 \frac{1}{\|\omega(t)\|_\infty^\frac{1}{2}}.
\]
In other words, the strong anisotropy of the Moffatt-Kimura scenario converts its `isotropic criticality' ($|\omega(x)| \approx \frac{1}{|x|^2}$)
into sub-criticality (within the scale of sparseness-analyticity radius tug-of-war framework) and \emph{the potential blow-up is averted 
by an algebraic margin}. Somewhat paradoxically, the very mechanism responsible for the amplification of the vorticity magnitude
is responsible for driving the flow into the dissipation range.
\end{remark}

\section{Acknowledgments}

The work is supported in part by the National Science Foundation grant DMS 2307657. The author would like to 
express his gratitude to Professor Peter Constantin for opening the door (quite emphatically) into the realm of 
the mathematical study of the interplay between the geometric properties of 3D incompressible flows 
and their regularity as well as for his support over the years. Thanks to the referee for their comments that 
improved clarity of the exposition.

\def\cprime{$'$}

\end{document}